\documentclass{elsarticle}

\usepackage[utf8]{inputenc}
\usepackage[english]{babel}
\usepackage{graphicx}

\usepackage{hyperref}
\usepackage{amsmath,amsthm,amssymb}
\usepackage{enumerate}
\usepackage{color}
\usepackage{cleveref} 
\usepackage{tikz}


\def\A{\mathcal{A}}
\def\B{\mathcal{B}}

\def\uu{\mathbf{u}}

\def\N{\mathbb{N}}
\def\R{\mathbb{R}}


\newtheorem{theorem}[]{Theorem}
\newtheorem{corollary}[theorem]{Corollary}
\newtheorem{lemma}[theorem]{Lemma}

\newtheorem{proposition}[theorem]{Proposition}
\newtheorem{definition}[theorem]{Definition}

\theoremstyle{remark}
\newtheorem{remark}[theorem]{Remark}
\newtheorem{example}[theorem]{Example}


\crefname{theorem}{Theorem}{Theorems}
\crefname{corollary}{Corollary}{Corollaries}
\crefname{example}{Example}{Examples}
\crefname{lemma}{Lemma}{Lemmas}
\crefname{proposition}{Proposition}{Propositions}
\crefname{definition}{Definition}{Definitions}
\crefname{example}{example}{examples}


\begin{document}
\begin{frontmatter}



\title{Characterization of circular D0L systems}


\author[fit]{Karel Klouda}
\author[fit]{Štěpán Starosta}

\address[fit]{Faculty of Information Technology, Czech Technical University in Prague, Thákurova~9, 160~00, Prague, Czech Republic}

\begin{abstract}
	We prove that every non-circular D0L system contains arbitrarily long repetitions.
	This result was already published in 1993 by Mignosi and Séébold, however their proof is only a sketch.
	We give here a complete proof.
	Further, employing our previous result, we give a simple algorithm to test circularity of an injective D0L system.
\end{abstract}

\begin{keyword}
D0L system \sep circular D0L system \sep repetition \sep critical exponent
\MSC 68R15
\end{keyword}

\end{frontmatter}

\section{Introduction}

In formal language theory, D0L languages form an important class.
See for instance\cite{RoSa80}. Starting by the work of Axel Thue, repetitions in various languages were studied.
In \cite{EhRo83}, the authors show that is it is decidable whether a D0L language is $k$-power free, i.e., does not contain a repetition of $k$ same words for some $k \in \N$. In \cite{MiSe}, the authors show that if a PD0L language is $k$-power free for some integer $k$, then it is circular. However, the authors give mostly only sketches of proofs, thus we give a sound proof here. Moreover, we generalize the result as we prove it for non-injective PD0L-systems and slightly relaxed definition of circularity, called weak circularity. We also give a simple algorithm to test whether an injective D0L system is circular.

\section{Preliminaries}

Let $\A$ be an \textit{alphabet}: a finite set of \textit{letters}.
The free monoid $\A^*$ is the set of all finite words over $\A$ endowed with concatenation.
The \textit{empty word} is denoted $\varepsilon$ and the set of all non-empty words over $\A$ is denoted $\A^+$.
The \textit{length} of $w \in \A^*$ is denoted $|w|$. Given a word $w \in \A^*$, we say that $u \in \A^*$ is a \textit{factor} of $w$ if there exists words $p$ and $s$, possibly empty, such that $w = pus$. Such a word $p$ is a \textit{prefix} of $w$, and the word $s$ is a \textit{suffix} of $w$. If $|p| < |w|$, $p$ is a \textit{proper} prefix, if $|s| < |w|$, $s$ is a \textit{proper} suffix.

The set $\A^\N$ is the set of all \textit{infinite words} over $\A$.
Given a word $w$, by $w^\omega$ we denote the infinite word $www \cdots$.

Let $\varphi$ be an endomorphism of $\A^*$.
We define
$$
\| \varphi \| = \max \{ |\varphi(a) | \colon a \in \A \} \quad \text{ and } \quad | \varphi | = \min \{ |\varphi(a) | \colon a \in \A \}.
$$

A triplet $G = (\A, \varphi, w)$ is a \textit{D0L system} if $\A$ is an alphabet, $\varphi$ is an endomorphism of $\A^*$, and $w \in \A^*$.
The word $w$ is the \textit{axiom} of $G$.
The \textit{sequence of $G$} is $E(G) = (w_i)_{i \geq 0}$ where $w_0 = w$ and $w_i = \varphi^i(w_0)$. The \textit{language of $G$} is the set $L(G) = \{ \varphi^n(w) \colon n \in \N \}$ and by $S(L(G))$ we denote the set of all factors appearing in $L(G)$. The alphabet is always considered to be the minimal alphabet necessary, i.e., $\A \cap S(L(G)) = \A$.

We say that a D0L system $G = (\A, \varphi, w)$ is \textit{injective} if for every $w,v \in S(L(G))$, $\varphi(w) = \varphi(v)$ implies that $w = v$. It is clear that if $\varphi$ is injective, then $G$ is injective.
The converse is not true: consider $\varphi: a \to abc, b \to bc, c \to a$, then $\varphi$ is not injective as $\varphi(cb) = \varphi(a)$ but $G = ( \{a,b,c\}, \varphi, a )$ is injective since $cb \not \in S(L(G))$. If $\varphi$ is \textit{non-erasing}, i.e., $\varphi(a) \neq \varepsilon$ for all $a \in \A$, then we speak about \textit{propagating D0L system}, shortly PD0L.

Given a D0L system $G = (\A, \varphi, w)$ we say that the letter $a$ is \textit{bounded} (or also of \textit{rank zero}) if the set $\{ \varphi^n(a) \colon n \in \N \}$ is finite.
If a letter is not bounded, it is \textit{unbounded}.
We denote the set of all bounded letters by $\A_0$.
The system $G$ is \textit{pushy} if $S(L(G))$ contains infinitely many factors over $\A_0$.

A D0L system is \textit{repetitive} if for any $k \in \N$ there is a non-empty word $w$ such that $w^k$ is a factor. By~\cite{EhRo83}, any repetitive D0L system is \textit{strongly repetitive}, i.e., there is a non-empty word $w$ such that $w^k$ is a factor for all $k \in \N$.

\section{Definition of circularity}

In the literature, one can find two slightly different views of circularity.
Both these views can be expressed in terms of interpretations.
\begin{definition}
Let $G = (\A,\varphi, w)$ be a PD0L-system. A triplet $(p,v,s)$ where $p,s \in \A^*$ and $v = v_1\cdots v_n \in \A^+$ is an \textit{interpretation of a word $u \in S(L(G))$} if $\varphi(v) = pus$.
\end{definition}

The following definition of circularity is used in~\cite{MiSe}.
\begin{definition} \label[definition]{def:circular-system}
	 Let $G = (\A,\varphi, w)$ be a PD0L-system and let $(p,v,s)$ and $(p',v',s')$ be two interpretations of a non-empty word $u \in S(L(G))$ with $v = v_1\cdots v_n$, $v' = v'_0 \cdots v'_m$ and $u = u_1 \cdots u_\ell$.
	 
	 We say that $G$ is \emph{circular} with \emph{synchronization delay $D > 0$} if whenever we have 
	 $$
	 	|\varphi(v_1\cdots v_i)| - |p| > D \quad \text{and} \quad |\varphi(v_{i+1} \cdots v_n)| - |s| > D
	 $$
	 for some $1 \leq i \leq n$, then there is $1 \leq j \leq m$ such that
	 $$
	 	|\varphi(v_1\cdots v_{i-1}| - |p| = |\varphi(v'_1\cdots v'_{j-1})| - |p'|
	 $$
	 and $v_i = v'_j$ (see Figure~\ref{fig:circularity_1}).
\end{definition}
This definition says that a long enough word has unique $\varphi$-preimage except for some prefix and suffix shorter than a constant $D$. Note that if a D0L system $G = (\A,\varphi, w)$ contains arbitrarily long words with two different $\varphi$-preimages (i.e., for any $n > 0$ there are words $v$ and $u$ in $S(L(G))$ longer than $n$ with $\varphi(v) = \varphi(u)$) cannot be circular.

In~\cite{Ca94}, a circular D0L system with injective morphism is defined using the notion of synchronizing point (see Section 3.2 in~\cite{Ca94} for details). We give here an equivalent definition employing the notion of interpretation. 
\begin{definition}
Let $G = (\A,\varphi, w)$ be a PD0L-system. We say that two interpretations $(p,v,s)$ and $(p',v',s')$ of a word $u \in SL(G)$ are \textit{synchronized at position $k$} if there exist nonnegative indices $i$ and $j$ such that
$$
	\varphi(v_1\cdots v_i) = p u_1 \cdots u_k \quad \text{ and } \quad \varphi(v'_1\cdots v'_j) = p' u_1 \cdots u_k
$$
with $v = v_1\cdots v_n$, $v' = v'_0 \cdots v'_m$ and $u = u_1 \cdots u_\ell$  (see Figure~\ref{fig:circularity_2}).

We say that a word $u \in S(L(G))$ has a \textit{synchronizing point} at position $k$ with $0 \leq k \leq |u|$ if all its interpretations are pairwise synchronized at position $k$.
\end{definition} \label[definition]{def:weak-circularity}
By~\cite{Ca94}, a D0L system $G$ with injective morphism is circular if there is positive $D$ such that any $v$ from $S(L(G))$ longer than $2D$ has a synchronizing point.
This definition is equivalent to \cref{def:circular-system}.
However, the synchronizing point is defined for D0L systems with just non-erasing morphism and so we can omit the assumption of injectiveness in \Cref{def:circular-system}.
\begin{definition}
	A PD0L-system $G$ is called \emph{weakly circular} if there is a constant $D > 0$ such that any $v$ from $S(L(G))$ longer that $2D$ has a synchronizing point.
\end{definition}
As said above, if $G$ is injective, weak circularity is equivalent to circularity.
As the following example shows, this is not true for the non-injective case.
\begin{example}\label[example]{ex:weak-needed}
Consider the D0L system $G_1 = (\{a,b,c\}, \varphi_1, a)$ with the non-injective $\varphi_1:  a \to abca, b \to bc, c \to bc$.
This system is not circular as for all $m \in \N$ the word $(bc)^{2m}$ has two different preimages $(bc)^m$ and $(cb)^m$.
The corresponding interpretations, however, have synchronizing points for $m > 1$ at positions $2k$ for all  $0 \leq k \leq m$.
Moreover, one can easily check that $G_1$ is weakly circular.
\end{example}
So, circularity implies weak circularity but the converse is not true.
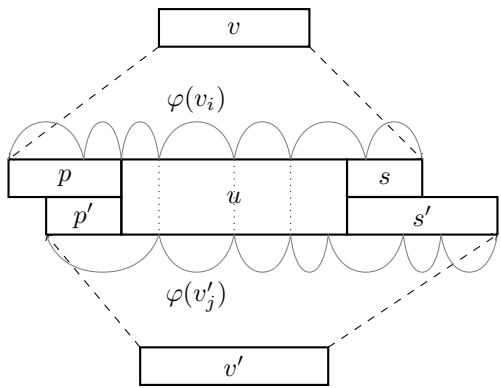
\begin{figure}[th]
\centering
\begin{tikzpicture}
	\draw[thick] (2.5,5) rectangle (4.5,5.5);	
	\node at (3.5,5.25) {$v$};
	\draw[thick] (.5,3.5) rectangle (2,3);
	\node at (1.25,3.25) {$p$};
	\draw[dashed,->] (2.5,5) -- (.5,3.5);
	\draw[thick] (2,3.5) rectangle (5,2.5);
	\node at (3.5,3) {$u$};
	\draw[thick] (5,3.5) rectangle (6,3);
	\node at (5.5,3.25) {$s$};
	\draw[dashed,->] (4.5,5) -- (6,3.5);
	\draw[thick] (1,3) rectangle (2,2.5);
	\node at (1.5,2.75) {$p'$};
	\draw[thick] (5,3) rectangle (7,2.5);
	\node at (6,2.75) {$s'$};
	\draw[thick] (2.25,0.5) rectangle (4.75,1);	
	\node at (3.5,.75) {$v'$};
	\draw[dashed,->] (2.25,1) -- (1,2.5);
	\draw[dashed,->] (4.75,1) -- (7,2.5);
	
	\draw [gray] plot [smooth, tension=1.5] coordinates { (0.5,3.5) (1,4) (1.5,3.5)};
	\draw [gray] plot [smooth, tension=1.5] coordinates { (1.5,3.5) (1.75,4) (2,3.5)};
	\draw [gray] plot [smooth, tension=1.5] coordinates { (2,3.5) (2.25,4) (2.5,3.5)};
	\draw [gray] plot [smooth, tension=1.5] coordinates { (2.5,3.5) (3,4) (3.5,3.5)};
	\node [above] at (3,4) {$\varphi(v_i)$};	
	\draw [gray] plot [smooth, tension=1.5] coordinates { (3.5,3.5) (3.875,4) (4.25,3.5)};
	\draw [gray] plot [smooth, tension=1.5] coordinates { (4.25,3.5) (4.75,4) (5.25,3.5)};
	\draw [gray] plot [smooth, tension=1.5] coordinates { (5.25,3.5) (5.625,4) (6,3.5)};	
	
	\draw [gray] plot [smooth, tension=1.5] coordinates { (1,2.5) (1.75,2) (2.5,2.5)};
	\draw [gray] plot [smooth, tension=1.5] coordinates { (2.5,2.5) (3,2) (3.5,2.5)};
	\node [below] at (3,2) {$\varphi(v'_j)$};
	\draw [gray] plot [smooth, tension=1.5] coordinates { (3.5,2.5) (3.875,2) (4.25,2.5)};	
	\draw [gray] plot [smooth, tension=1.5] coordinates { (4.25,2.5) (4.5,2) (4.75,2.5)};	
	\draw [gray] plot [smooth, tension=1.5] coordinates { (4.75,2.5) (5.25,2) (5.75,2.5)};
	\draw [gray] plot [smooth, tension=1.5] coordinates { (5.75,2.5) (6,2) (6.25,2.5)};
	\draw [gray] plot [smooth, tension=1.5] coordinates { (6.25,2.5) (6.625,2) (7,2.5)};
	
	\draw[dotted] (2.5,3.5) -- (2.5,2.5);
	\draw[dotted] (3.5,3.5) -- (3.5,2.5);
	\draw[dotted] (4.25,3.5) -- (4.25,2.5);

\end{tikzpicture}
\caption{Two interpretations from \cref{def:circular-system} with $v_i = v'_j$.}
 \label{fig:circularity_1}
\end{figure}

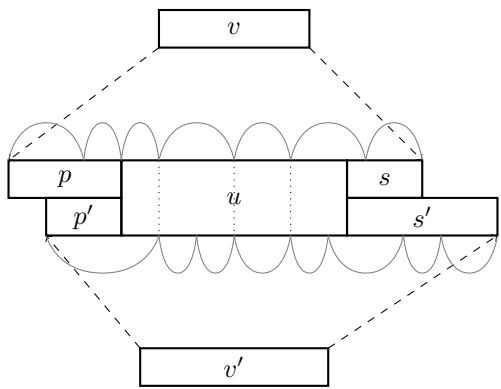
\begin{figure}[th]
\centering
\begin{tikzpicture}				
	\draw[thick] (11.5,5) rectangle (13.5,5.5);	
	\node at (12.5,5.25) {$v$};
	\draw[thick] (9.5,3.5) rectangle (11,3);
	\node at (10.25,3.25) {$p$};
	\draw[dashed,->] (11.5,5) -- (9.5,3.5);
	\draw[thick] (11,3.5) rectangle (14,2.5);
	\node at (12.5,3) {$u$};
	\draw[thick] (14,3.5) rectangle (15,3);
	\node at (14.5,3.25) {$s$};
	\draw[dashed,->] (13.5,5) -- (15,3.5);
	\draw[thick] (10,3) rectangle (11,2.5);
	\node at (10.5,2.75) {$p'$};
	\draw[thick] (14,3) rectangle (16,2.5);
	\node at (15,2.75) {$s'$};
	\draw[thick] (11.25,0.5) rectangle (13.75,1);	
	\node at (12.5,.75) {$v'$};
	\draw[dashed,->] (11.25,1) -- (10,2.5);
	\draw[dashed,->] (13.75,1) -- (16,2.5);
	
	\draw [gray] plot [smooth, tension=1.5] coordinates { (9.5,3.5) (10,4) (10.5,3.5)};
	\draw [gray] plot [smooth, tension=1.5] coordinates { (10.5,3.5) (10.75,4) (11,3.5)};
	\draw [gray] plot [smooth, tension=1.5] coordinates { (11,3.5) (11.25,4) (11.5,3.5)};
	\draw [gray] plot [smooth, tension=1.5] coordinates { (11.5,3.5) (12,4) (12.5,3.5)};
	\draw [gray] plot [smooth, tension=1.5] coordinates { (12.5,3.5) (12.875,4) (13.25,3.5)};
	\draw [gray] plot [smooth, tension=1.5] coordinates { (13.25,3.5) (13.75,4) (14.25,3.5)};
	\draw [gray] plot [smooth, tension=1.5] coordinates { (14.25,3.5) (14.625,4) (15,3.5)};	
	
	\draw [gray] plot [smooth, tension=1.5] coordinates { (10,2.5) (10.75,2) (11.5,2.5)};
	\draw [gray] plot [smooth, tension=1.5] coordinates { (11.5,2.5) (11.75,2) (12,2.5)};
	\draw [gray] plot [smooth, tension=1.5] coordinates { (12,2.5) (12.25,2) (12.5,2.5)};
	\draw [gray] plot [smooth, tension=1.5] coordinates { (12.5,2.5) (12.875,2) (13.25,2.5)};	
	\draw [gray] plot [smooth, tension=1.5] coordinates { (13.25,2.5) (13.5,2) (13.75,2.5)};	
	\draw [gray] plot [smooth, tension=1.5] coordinates { (13.75,2.5) (14.25,2) (14.75,2.5)};
	\draw [gray] plot [smooth, tension=1.5] coordinates { (14.75,2.5) (15,2) (15.25,2.5)};
	\draw [gray] plot [smooth, tension=1.5] coordinates { (15.25,2.5) (15.625,2) (16,2.5)};
	
	\draw[dotted] (11.5,3.5) -- (11.5,2.5);
	\draw[dotted] (12.5,3.5) -- (12.5,2.5);
	\draw[dotted] (13.25,3.5) -- (13.25,2.5);
	
\end{tikzpicture}
\caption{Two interpretations from \cref{def:weak-circularity} synchronized at positions depicted by dotted lines.}
 \label{fig:circularity_2}
\end{figure}

\section{Main result}

\begin{theorem}\label[theorem]{thm:main-result}
	Any PD0L system that is not weakly circular is repetitive.
\end{theorem}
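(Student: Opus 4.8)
The plan is to argue directly: assuming $G=(\A,\varphi,w)$ is a PD0L system that is not weakly circular, I will produce, for every $k\in\N$, a non-empty word $z$ with $z^{k}\in S(L(G))$; in fact a single $z$ will serve all $k$ in each case, so the passage from repetitive to strongly repetitive via~\cite{EhRo83} will not be needed. The hypothesis enters as: for every $D>0$ there is $u\in S(L(G))$ with $|u|>2D$ having no synchronizing point, i.e.\ for each position $k$ some two interpretations of $u$ are not synchronized at $k$. I would split into the pushy and non-pushy cases.

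\smallskip\noindent\emph{Pushy case.}
Suppose $S(L(G))$ contains factors over $\A_{0}$ of unbounded length. I would use two elementary facts: (i) if $a$ is unbounded then $\varphi(a)$ contains an unbounded letter, since otherwise $\varphi^{n}(a)$ is a concatenation of images of bounded letters and hence of bounded length; and (ii) every word over $\A_{0}$ has finite $\varphi$-orbit, because $\varphi$ is non-erasing and $\varphi(\A_{0})\subseteq\A_{0}^{*}$ with the bounded letters having uniformly bounded images. Tracing a very long factor over $\A_{0}$ backwards through the derivation, applying (i) at each step to follow the nearest unbounded letter and noting that the bounded material peeled off per step has bounded length, the (finite, hence eventually periodic) orbit of this sequence of letters yields an unbounded letter $c$ and an $m\ge1$ with $\varphi^{m}(c)=z_{1}\,c\,z_{2}$ where $z_{2}$, or symmetrically $z_{1}$, is a non-empty word over $\A_{0}$. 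Then $\varphi^{km}(c)$ contains $z_{2}\,\varphi^{m}(z_{2})\cdots\varphi^{(k-1)m}(z_{2})\in\A_{0}^{*}$, and by (ii) the sequence $\bigl(\varphi^{im}(z_{2})\bigr)_{i\ge0}$ is eventually periodic, so for large $k$ this word contains an arbitrarily high power of one fixed non-empty word; hence $G$ is repetitive. (This case does not use the failure of weak circularity; it just removes pushy systems from consideration.)

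\smallskip\noindent\emph{Non-pushy case, setup.}
Now $G$ is not pushy, so a constant $K_{0}$ bounds the length of every maximal factor over $\A_{0}$ occurring in $L(G)$; the coarse structure of words of $L(G)$ is carried by unbounded letters. The first step is to upgrade ``no synchronizing point'' to: for arbitrarily large $N$ there is $u\in S(L(G))$ with $|u|>N$ carrying two interpretations $(p,v,s)$ and $(p',v',s')$ whose sets of block boundaries inside $u$ are \emph{disjoint}; I expect non-pushiness to be exactly what makes this possible, ruling out the situation where the missing common synchronization position is an artefact of a varying family of interpretations rather than of one badly placed pair. To each block boundary $e$ of either interpretation I would attach a local configuration: which of the two interpretations $e$ belongs to, the two letters of $v,v'$ whose $\varphi$-images straddle $e$, and the offset of $e$ inside each of those blocks. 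Since $\varphi$ is non-erasing, consecutive boundaries lie within distance $\|\varphi\|$, so only finitely many local configurations occur.

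\smallskip\noindent\emph{Non-pushy case, the crux.}
As $|u|\to\infty$, some window of a fixed number of consecutive local configurations recurs, and, crucially, the two occurrences are within a bounded distance $Q$ (bounded in terms of the number of configurations and $\|\varphi\|$); the aim is that such a recurrence forces $u$ to have a period at most $Q$, whence $u$ contains a $\lfloor|u|/Q\rfloor$-th power of a word of length at most $Q$, and (choosing one of the finitely many possible such words that recurs) $G$ is repetitive. The obstacle — and precisely the gap in the sketch of~\cite{MiSe} — is that a local configuration at $e$ does not by itself determine the configuration at the next boundary: the letter of $v$ (resp.\ $v'$) that begins at a boundary of the first (resp.\ second) interpretation is not recorded, so a plain pigeonhole on configurations does not propagate to a period. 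My plan to close this is to carry the bookkeeping up the tower of desubstitutions as well ($v$ and $v'$ are themselves factors of $L(G)$, with their own interpretations, and so on), tracking configurations at several levels at once so that a recurrence pins down not only the current picture but its continuation; non-pushiness and the bound $K_{0}$ on bounded-letter blocks should keep this tower under control. I expect this propagation step — ensuring that a repeated compound configuration forces a genuine global period rather than a local coincidence — to be the main difficulty, which is indeed the part only sketched in~\cite{MiSe}.
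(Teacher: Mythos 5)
Your pushy case is fine in outline (that pushy systems are repetitive is a standard fact, and your sketch of following an unbounded letter that repeatedly sheds bounded material is the usual argument for it), but the non-pushy case --- which is the whole content of the theorem --- has a genuine gap, and you acknowledge it yourself. Two steps are missing. First, the upgrade from ``no synchronizing point'' (for every position, \emph{some} pair of interpretations fails to synchronize there) to ``a single pair of interpretations whose boundary sets are disjoint'' is asserted only with the hope that non-pushiness supplies it; no argument is given, and it is not clear the stronger statement is even needed or true. Second, and decisively, your pigeonhole on windows of local configurations does not produce a period, for exactly the reason you identify: a configuration at one boundary does not determine the configuration at the next. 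Your proposed repair --- tracking configurations across a tower of desubstitution levels --- is left as a plan, so the proposal reproduces the sketchy step of Mignosi and S\'e\'ebold rather than completing it.

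For comparison, the paper closes this gap by a different mechanism. Instead of comparing two nearby occurrences of a configuration inside one word, it compares the \emph{same} non-synchronized configuration at two iteration depths $n<m$: one finds a word $u$, a context $v$ with $\varphi^i(v)=p_i\varphi^i(u)s_i$, and letters $a,b$ with $\varphi^i(v)$ a factor of $\varphi^i(aub)$, such that the interpretations $(\varepsilon,\varphi^{i-1}(u),\varepsilon)$ and $(p_i,\varphi^{i-1}(v),s_i)$ are non-synchronized for both $i=n$ and $i=m$. Writing $\varphi^m(v)$ both as $p_m\varphi^m(u)s_m$ and as $\varphi^{m-n}(p_n)\varphi^m(u)\varphi^{m-n}(s_n)$, and using that the two prefixes differ (otherwise the interpretations would synchronize at position $0$), one of them is a proper prefix of the other, so $\varphi^m(u)$ is a prefix of $z^\ell$ for a word $z$ of length at most $|\varphi^m(a)|$, which is negligible compared with $|\varphi^m(u)|$. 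The pigeonhole is therefore over the finitely many candidates for $(a,u,b,v)$ across the depths $\ell$, not over local configurations along a single word; the residual difficulty that $|v|$ might be unbounded is handled by partitioning the alphabet by growth rate and descending through the levels, with Lemma~\ref{le:bounded_are_sync} terminating the descent at the bounded letters. If you want to salvage your approach, this ``same configuration at two depths'' comparison is precisely the idea your tower of desubstitutions is missing.
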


The two following lemmas will be used to prove this theorem,.
The next lemma and its proof is based on the ideas in the proof of Theorem 4.35 in \cite{Ku_ToSyDy}.
\begin{lemma}\label[lemma]{lem:kurka}
	Let $G = (\A,\varphi,w)$ be a PD0L system.
	If there exists a sequence $\epsilon(k)$ with $\displaystyle \lim_{k \to +\infty} \epsilon(k) = +\infty$
	and if for any $k \in \N$ there are two non-empty words $u$ and $v$ in $S(L(G))$ containing an unbounded letter such that the following conditions are satisfied 
	\begin{enumerate}[(i)]
		\item $|u| = k$;
		\item \label{lk_bod2} there are two integers $m$ and $n$ such that $m > n$ and letters $a$ and $b$ such that for each $i \in \{m,n\}$ the word $\varphi^i(u)$ is a factor of $\varphi^i(v)$ and $\varphi^i(v)$ is a factor of $\varphi^i(aub)$,
		moreover, $\frac{|\varphi^i(u)|}{|\varphi^i(a)|} > \epsilon(k)$ or $\frac{|\varphi^i(u)|}{|\varphi^i(b)|} > \epsilon(k)$; and
		\item \label{lk_bod3} for each $i \in \{m,n\}$ the factor $\varphi^i(u)$ has no synchronizing point: two non-synchronized in\-ter\-pre\-ta\-ti\-ons are $(\varepsilon,\varphi^{i-1}(u),\varepsilon)$ and $(p_i,\varphi^{i-1}(v),s_i)$,
	\end{enumerate}
	then the D0L system is repetitive.
\end{lemma}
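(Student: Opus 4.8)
The plan is to show that for every $r\in\N$ the set $S(L(G))$ contains an $r$-th power, which is exactly the statement that $G$ is repetitive. Fix $r$ and choose $k$ so large that $\epsilon(k)\ge r+1$; let $u,v,m,n,a,b$ be the objects supplied by the hypothesis for this $k$, and work first at level $i=m$. From $\varphi^m(u)\sqsubseteq\varphi^m(v)$ we get a factorisation $\varphi^m(v)=p_m\,\varphi^m(u)\,s_m$, and from $\varphi^m(v)\sqsubseteq\varphi^m(aub)=\varphi^m(a)\varphi^m(u)\varphi^m(b)$ a factorisation $\varphi^m(aub)=X_m\,\varphi^m(v)\,Y_m$ for some words $X_m,Y_m$. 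Thus inside $\varphi^m(aub)$ the word $\varphi^m(u)$ occurs at two positions: the \emph{canonical} one, at offset $|\varphi^m(a)|$, and the one carried by $v$, at offset $|X_m|+|p_m|$; the latter lies inside the occurrence of $\varphi^m(v)$, a word of $S(L(G))$.

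The engine is the elementary observation that if a word $z$ occurs at two positions of a word $t$ at distance $d$ with $0<d<|z|$, then $t$ has a factor of length $|z|+d$ with period $d$, hence of exponent $1+|z|/d$. Apply this with $z=\varphi^m(u)$: the distance $d$ between the two occurrences above is at most $|\varphi^m(a)|$ or at most $|\varphi^m(b)|$ according to the direction of the displacement, so by condition~\ref{lk_bod2} (choosing the matching inequality) $d<|\varphi^m(u)|/\epsilon(k)$. In particular $d<|\varphi^m(u)|$, the two occurrences overlap, and the resulting factor has exponent $>1+\epsilon(k)\ge r+2$, so it contains an $r$-th power. It remains (a) to exclude $d=0$, and (b) to place this factor inside a word of $S(L(G))$ rather than merely inside $\varphi^m(aub)$.

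For (a): if $d=0$ the $v$-carried occurrence of $\varphi^m(u)$ coincides with the canonical one, and pushing this equality through one application of $\varphi$ one sees that the interpretations $(\varepsilon,\varphi^{m-1}(u),\varepsilon)$ and $(p_m,\varphi^{m-1}(v),s_m)$ of $\varphi^m(u)$ would be synchronized at some position, contradicting condition~\ref{lk_bod3}; should the degeneracy fail to be excluded at level $m$ itself, one iterates $\varphi$ and plays level $m$ against level $n$ --- this is precisely why the hypothesis provides \emph{two} levels $m>n$, so that the non-synchronization cannot be an artefact of a single, accidentally aligned level. For (b): the factor of the previous paragraph lies inside the occurrence of $\varphi^m(v)$ as soon as $d$ does not exceed the margin $|p_m|$ (resp.\ $|s_m|$) on the appropriate side of that occurrence. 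To secure this, replace $n$ by $n+j$ by applying $\varphi^{j}$: the shift $d$ and the margin are each replaced by their $\varphi^{j}$-images, while $|\varphi^{n+j}(u)|$ tends to infinity because $u$ contains an unbounded letter; combining this with the ratio bounds of~\ref{lk_bod2} at the two levels $m>n$ one checks that some $j$ makes the shift smaller than the margin while keeping the exponent above $r$, so the $r$-th power genuinely occurs inside a word $\varphi^{n+j}(v)\in S(L(G))$.

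The hard part is (a) together with this last point: a priori the two readings of $\varphi^m(u)$ could differ only in how the word is cut into blocks $\{\varphi(c):c\in\A\}$ while lying at the very same position, and one must convert this block-level mismatch into an honest positional shift of a long factor inside $S(L(G))$. This is exactly what forces the use of the two levels $m>n$, of the unbounded letter in $u$ (to keep $|\varphi^i(u)|$ large compared with the blocks $\varphi^i(a),\varphi^i(b)$), and of the size control in condition~\ref{lk_bod2}. Once the shift is produced, letting $r\to\infty$ --- equivalently $k\to\infty$, using $\epsilon(k)\to\infty$ --- yields repetitions of unbounded exponent, so $G$ is repetitive.
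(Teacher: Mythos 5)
Your periodicity engine (two overlapping occurrences of $\varphi^m(u)$ at distance $d$ yield a factor of exponent roughly $|\varphi^m(u)|/d$) is the same one the paper uses, but you apply it to the wrong pair of occurrences, and this opens a gap that you correctly flag as ``the hard part'' but never close. You compare the occurrence of $\varphi^m(u)$ carried by $v$ with the \emph{canonical} occurrence at offset $|\varphi^m(a)|$ inside $\varphi^m(aub)$. The resulting displacement $d$ is a purely positional quantity, whereas hypothesis~(iii) is a statement about $\varphi$-preimages: the two interpretations $(\varepsilon,\varphi^{m-1}(u),\varepsilon)$ and $(p_m,\varphi^{m-1}(v),s_m)$ describe the \emph{same} word $\varphi^m(u)$ and differ only in how it is cut into blocks $\varphi(c)$ and which preimage letters produce them. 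Nothing in the hypotheses prevents $d=0$, i.e.\ the $v$-carried occurrence sitting exactly at the canonical position while the two interpretations are still non-synchronized; your claim that $d=0$ would force a synchronizing point is unjustified, and your fallback (``iterate $\varphi$ and play level $m$ against level $n$'') is precisely the missing argument rather than a sketch of it. As written, the proof can fail to produce any nonzero period.

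The paper extracts a provably nonzero shift from a different pair of factorizations of the single word $\varphi^m(v)$: on one hand $\varphi^m(v)=p_m\,\varphi^m(u)\,s_m$ (native to level $m$), on the other hand $\varphi^m(v)=\varphi^{m-n}(p_n)\,\varphi^m(u)\,\varphi^{m-n}(s_n)$ (the level-$n$ factorization pushed forward by $\varphi^{m-n}$). Here non-synchronization really bites: $\varphi^{m-n}(p_n)$ is the $\varphi$-image of the prefix $\varphi^{m-n-1}(p_n)$ of $\varphi^{m-1}(v)$, so $p_m=\varphi^{m-n}(p_n)$ (or $p_m=\varepsilon$) would synchronize the two interpretations at position $0$, contradicting~(iii). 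Hence one of $p_m$, $\varphi^{m-n}(p_n)$ is a proper nonempty prefix of the other, giving a nonempty $z$ with, say, $p_m z=\varphi^{m-n}(p_n)$; cancelling $p_m$ shows $\varphi^m(u)$ is a prefix of $z\varphi^m(u)$, hence of $z^\ell$, with $|z|$ controlled by $|\varphi^m(a)|$. The power $z^{\lfloor\epsilon(k)\rfloor}$ then sits inside $\varphi^m(u)\in S(L(G))$, which also dissolves your concern~(b) about landing in $S(L(G))$ and removes the need for your auxiliary shift-versus-margin analysis. Without the comparison of $p_m$ against $\varphi^{m-n}(p_n)$ — the only place where the two levels $m>n$ are actually used — your argument does not go through.
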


\begin{proof}
	Suppose that $\frac{|\varphi^i(u)|}{|\varphi^i(a)|} > \epsilon(k)$ is true in requirement \eqref{lk_bod2}, the other case $\frac{|\varphi^i(u)|}{|\varphi^i(b)|} > \epsilon(k)$ is analogous.
	It holds that
	$$
		\varphi^m(v) = p_m \varphi^m(u) s_m = \varphi^{m-n}(\varphi^n(v)) = \varphi^{m-n}(p_n) \varphi^m(u) \varphi^{m-n}(s_n).
	$$
	The fact that the interpretations $(\varepsilon,\varphi^{m-1}(u),\varepsilon)$ and $(p_m,\varphi^{m-1}(v),s_m)$ are not synchronized implies that $p_m \neq \varphi^{m-n}(p_n)$ (if $p_m = \varphi^{m-n}(p_n)$, the two interpretations of $\varphi^m(u)$ are synchronized at position $0$, see Figure~\ref{fig:proof_of_lemma_1}).
	Since $p_m \varphi^m(u) s_m = \varphi^{m-n}(p_n) \varphi^m(u) \varphi^{m-n}(s_n)$ the word $p_m$ is a proper prefix of $\varphi^{m-n}(p_n)$ or vice versa.
	Moreover, $p_m$ is not empty since it contradicts again the point \eqref{lk_bod3}.
	Suppose $p_m$ is a non-empty proper prefix of $\varphi^{m-n}(p_n)$.
	It implies there exists a word $z$ such that $p_m z = \varphi^{m-n}(p_n)$.
	If $\varphi^{m-n}(p_n)$ is a non-empty proper prefix of $p_m$, then we may find a word $z$ such that $\varphi^{m-n}(p_n)z = p_m$ (see Figure~\ref{fig:proof_of_lemma_2}).
\begin{figure}[th]
\centering
\begin{tikzpicture}
	\draw[thick] (2,5) rectangle (2.5,5.5);	
	\node at (2.25,5.25) {$a$};
	\draw[thick] (2.5,5) rectangle (4.5,5.5);	
	\node at (3.5,5.25) {$u$};
	\draw[thick] (4.5,5) rectangle (5,5.5);	
	\node at (4.75,5.25) {$b$};
	\draw[thick] (0,3.5) rectangle (2,3);
	\node at (1,3.25) {$p_m$};
	\draw[thick] (2,3.5) rectangle (5,3);
	\node at (3.5,3.25) {$\varphi^m(u)$};
	\draw[thick] (5,3.5) rectangle (7.5,3);
	\node at (6.25,3.25) {$s_m$};

	\draw[dashed,->] (2.5,5) -- (2,3.5);
	\draw[dashed,-] (2,5) -- (-0,4);	
	\draw[dashed,->] (4.5,5) -- (5,3.5);
	\draw[dashed,-] (5,5) -- (7.5,4);
	
	\draw[thick] (0,3) rectangle (2,2.5);
	\node at (1,2.75) {$\varphi^{m-n}(p_n)$};
	\draw[thick] (2,3) rectangle (5,2.5);
	\node at (3.5,2.75) {$\varphi^m(u)$};
	\draw[thick] (5,3) rectangle (7.5,2.5);
	\node at (6.25,2.75) {$\varphi^{m-n}(s_n)$};
	\node at (8.5,3) {$= \varphi^m(v)$};
	
	\draw[thick] (1.5,1) rectangle (2.5,0.5);
	\node at (2,0.75) {$p_n$};
	\draw[thick] (2.5,1) rectangle (4.5,0.5);
	\node at (3.5,.75) {$\varphi^n(u)$};
	\draw[thick] (4.5,1) rectangle (5.5,0.5);
	\node at (5,.75) {$s_n$};
	\node[right] at (5.5,0.75) {$= \varphi^n(v)$};
	
	\draw[dashed,->] (1.5,1) -- (0,2.5);
	\draw[dashed,->] (2.5,1) -- (2,2.5);
	\draw[dashed,->] (4.5,1) -- (5,2.5);
	\draw[dashed,->] (5.5,1) -- (7.5,2.5);
	
	\draw[thick] (2.75,-1.5) rectangle (4.25,-1);
	\node at (3.5,-1.25) {$v$};
	\draw[dashed,->] (2.75,-1) -- (1.5,.5);
	\draw[dashed,->] (4.25,-1) -- (5.5,.5);
\end{tikzpicture}
\caption{The first arrangement from the proof of \Cref{lem:kurka}.}
\label{fig:proof_of_lemma_1}
\end{figure}
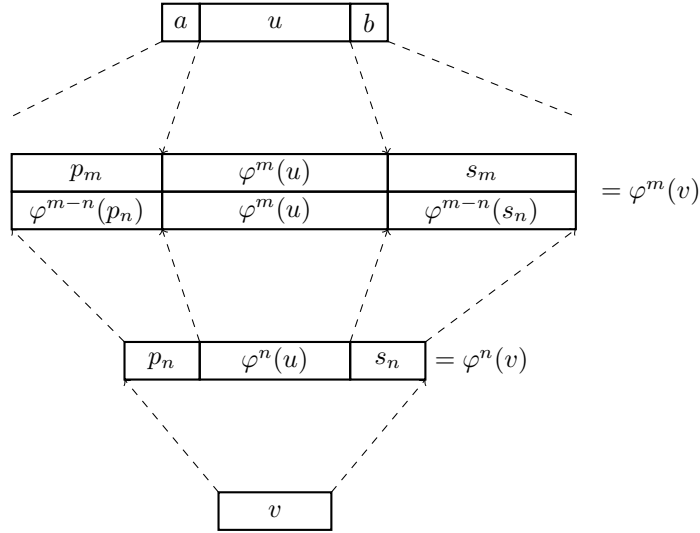

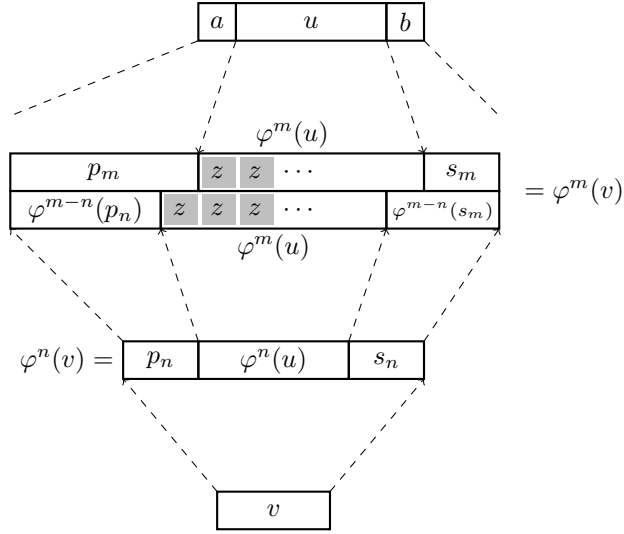
\begin{figure}[th]
\centering
\begin{tikzpicture}
	\draw[thick] (12,5) rectangle (12.5,5.5);	
	\node at (12.25,5.25) {$a$};
	\draw[thick] (12.5,5) rectangle (14.5,5.5);	
	\node at (13.5,5.25) {$u$};
	\draw[thick] (14.5,5) rectangle (15,5.5);	
	\node at (14.75,5.25) {$b$};
	\draw[thick] (9.5,3.5) rectangle (12,3);
	\node at (10.75,3.25) {$p_m$};
	\draw[thick] (12,3.5) rectangle (15,3);
	\node at (13.25,3.75) {$\varphi^m(u)$};
	\path [fill=lightgray] (12.05,3.45) rectangle (12.5,3.05);
	\node at (12.25, 3.25) {$z$};
	\path [fill=lightgray] (12.55,3.45) rectangle (13,3.05);
	\node at (12.75, 3.25) {$z$};
	\node [right] at (13, 3.25) {$\cdots$};
	\draw[thick] (15,3.5) rectangle (16,3);
	\node at (15.5,3.25) {$s_m$};	

	\draw[dashed,->] (12.5,5) -- (12,3.5);
	\draw[dashed,-] (12,5) -- (9.5,4);	
	\draw[dashed,->] (14.5,5) -- (15,3.5);
	\draw[dashed,-] (15,5) -- (16,4);
	
	\draw[thick] (9.5,3) rectangle (11.5,2.5);
	\node at (10.5,2.75) {$\varphi^{m-n}(p_n)$};
	\draw[thick] (11.5,3) rectangle (14.5,2.5);
	\node at (13,2.25) {$\varphi^m(u)$};
	\path [fill=lightgray] (11.55,2.95) rectangle (12,2.55);
	\node at (11.75, 2.75) {$z$};
	\path [fill=lightgray] (12.05,2.95) rectangle (12.5,2.55);
	\node at (12.25, 2.75) {$z$};
	\path [fill=lightgray] (12.55,2.95) rectangle (13,2.55);
	\node at (12.75, 2.75) {$z$};
	\node [right] at (13, 2.75) {$\cdots$};		
	\draw[thick] (14.5,3) rectangle (16,2.5);
	\node at (15.25,2.75) {$\scriptstyle{\varphi^{m-n}(s_m)}$};
	
	\draw[thick] (11,1) rectangle (12,0.5);
	\node at (11.5,0.75) {$p_n$};
	\draw[thick] (12,1) rectangle (14,0.5);
	\node at (13,.75) {$\varphi^n(u)$};
	\draw[thick] (14,1) rectangle (15,0.5);
	\node at (14.5,.75) {$s_n$};
	\node[left] at (11,0.75) {$\varphi^n(v) = $};
	
	\draw[dashed,->] (11,1) -- (9.5,2.5);
	\draw[dashed,->] (12,1) -- (11.5,2.5);
	\draw[dashed,->] (14,1) -- (14.5,2.5);
	\draw[dashed,->] (15,1) -- (16,2.5);
	
	\draw[thick] (12.25,-1.5) rectangle (13.75,-1);
	\node at (13,-1.25) {$v$};
	\draw[dashed,->] (12.25,-1) -- (11,.5);
	\draw[dashed,->] (13.75,-1) -- (15,.5);
	
	\node at (17,3) {$= \varphi^m(v)$};
	\end{tikzpicture}
\caption{The second arrangement from the proof of \cref{lem:kurka}.}
\label{fig:proof_of_lemma_2}
\end{figure}
	Therefore, in both cases, the word $\varphi^m(u)$ is a prefix of $z^\ell$ for some integer $\ell$. Since 
	$$
		\frac{|\varphi^m(u)|}{|z|} > \frac{|\varphi^m(u)|}{\max\{|p_m|,|\varphi^{m-n}(p_n)|\}} > \frac{|\varphi^m(u)|}{|\varphi^m(a)|} > \epsilon(k),
	$$
	we deduce that $z^{\lfloor \epsilon(k) \rfloor}$ is factor of $\varphi^m(u)$.

	As $\displaystyle \lim_{k \to +\infty} \epsilon(k) = +\infty$, the D0L-system is repetitive.	
\end{proof}

\begin{lemma} \label{le:bounded_are_sync}
	In any PD0L system there is a constant $C$ such that all factors over bounded letters longer than $C$ have a synchronizing point. 
\end{lemma}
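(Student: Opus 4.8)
The plan is to exploit the special structure of the bounded letters to reduce the statement to a periodicity phenomenon, and then to close with a Fine--Wilf argument.

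\emph{Step 1 (bounded letters and interiors of interpretations).} First I record two elementary facts. If $b\in\A_0$ and a letter $c$ occurs in $\varphi(b)$, then $\varphi^n(c)$ is a factor of $\varphi^{n+1}(b)$ for every $n$, so $c\in\A_0$; hence $\varphi(\A_0^*)\subseteq\A_0^*$. Also, as $\varphi$ is non-erasing, $(|\varphi^n(b)|)_n$ is non-decreasing for $b\in\A_0$ and, being bounded, eventually constant; this forces the sequence of words $(\varphi^n(b))_n$ to be eventually periodic, with pre-period and period bounded in terms of $G$ only. (If $\A_0=\emptyset$ or $S(L(G))\cap\A_0^*$ is finite the lemma is trivial, so assume otherwise.) Now let $u\in\A_0^+\cap S(L(G))$ with $|u|>\|\varphi\|$ and let $(p,v,s)$, $v=v_1\cdots v_n$, $\varphi(v)=pus$, be an interpretation of $u$. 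Let $v_a,v_c$ be the letters of $v$ whose images cover the first and the last letter of $u$; since $|u|>\|\varphi\|\ge|\varphi(v_a)|$ we have $a<c$, and for $a<i<c$ the block $\varphi(v_i)$ is a factor of $u$, hence lies in $\A_0^+$, forcing $v_i\in\A_0$. Thus $u=x\,\varphi(y)\,z$ with $y=v_{a+1}\cdots v_{c-1}\in\A_0^*\cap S(L(G))$, $x$ a non-empty suffix of $\varphi(v_a)$, $z$ a non-empty prefix of $\varphi(v_c)$, $|x|,|z|\le\|\varphi\|$, $|y|\ge|u|/\|\varphi\|-2$; and the block boundaries of $(p,v,s)$ inside $u$ contain the $\|\varphi\|$-syndetic set $\{\,|x|+|\varphi(y_1\cdots y_t)| : 0\le t\le|y|\,\}$ of the interval $[\,|x|,\,|u|-|z|\,]$.

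\emph{Step 2 (periodicity of long bounded factors).} This is the heart of the proof: I claim there is a constant $P$, depending only on $G$, such that every $u\in\A_0^+\cap S(L(G))$ with $|u|>P$ becomes, after deleting a prefix and a suffix of length at most $P$, periodic with a period at most $P$. I would prove this by descending through $\varphi$: by Step 1, a maximal factor of $\varphi^N(w)$ over $\A_0$ is the $\varphi$-image of a shorter maximal bounded factor of $\varphi^{N-1}(w)$, up to a prefix and a suffix of length at most $\|\varphi\|$ produced by the neighbouring unbounded letters (or by the ends of the word); iterating and feeding in the eventual periodicity of the orbits $(\varphi^n(b))_n$ from Step 1 shows that every sufficiently long bounded factor is ultimately periodic from both ends, with bounded transients and with period a primitive word from a fixed finite list. (Equivalently: the factor complexity of $S(L(G))\cap\A_0^*$ is bounded by a constant.) I expect this classification to be the principal obstacle; the delicate part is controlling the transient prefixes and suffixes, and the finitely many admissible periods, as $N$ grows.

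\emph{Step 3 (conclusion).} Take $C$ of order $\|\varphi\|^2P^2$ and $u\in\A_0^+\cap S(L(G))$ with $|u|>C$; by Step 2, $u$ has a period $q\le P$ on $I=[\,P,\,|u|-P\,]$. Let $(p,v,s)$ be an arbitrary interpretation and write $u=x\,\varphi(y)\,z$ as in Step 1. Since $|y|>P$, Step 2 applies to $y$ too: outside ends of bounded length, $y$ is periodic with primitive period $r$, $|r|\le P$, so $\varphi(y)$ — hence $u$, on the deep interior $I'$ obtained from $I$ by removing $\|\varphi\|P$ at each end (still of length $>C/2$) — carries the period $|\varphi(r)|\le\|\varphi\|P$. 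On $I'$ the word $u$ thus has the two periods $q$ and $|\varphi(r)|$; as $|u|>C$ exceeds the Fine--Wilf bound for them, $g:=\gcd(q,|\varphi(r)|)$ is again a period there, and — since $|\varphi(y_1\cdots y_t)|$ increases by $|\varphi(r)|$ whenever $t$ advances by $|r|$ — the boundary set of $(p,v,s)$ on $I'$ is an $\|\varphi\|$-syndetic union of arithmetic progressions of common difference $|\varphi(r)|$, each lying in a single residue class modulo $g$. Carrying this out for all interpretations of $u$ and using that they tile the \emph{same} word $u$, whose deep interior is periodic with the common period $g^*=\gcd$ of all the periods that occur (again by Fine--Wilf, since $|u|>C$), an elementary argument on residue classes and syndetic sets shows that the boundary sets of all interpretations share a position in the deep interior as soon as $|u|>C$. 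Such a position is a block boundary of every interpretation of $u$, i.e.\ a synchronizing point; the lemma follows with constant $C$.
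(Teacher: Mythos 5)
Your Step 2 is where the argument breaks, and its central claim is false, not merely left unproven. A long factor over $\A_0$ need not become periodic with a bounded period after trimming a bounded prefix and suffix. Consider the PD0L system with axiom $AB$ and $\varphi\colon A\mapsto Ab,\ B\mapsto cB,\ b\mapsto b,\ c\mapsto c$. Then $\varphi^n(AB)=Ab^nc^nB$, so $b^nc^n\in S(L(G))\cap\A_0^+$ for every $n$, and $b^nc^n$ has no period smaller than $2n$ (any shift by $1\le p<2n$ moves some $b$ onto a $c$); deleting a prefix and a suffix of length $P$ leaves $b^{n-P}c^{n-P}$, which is just as aperiodic. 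The correct structure statement is weaker (a long bounded factor is a concatenation of boundedly many periodic pieces with possibly different periods), and that already destroys the single-period Fine--Wilf setup of Step 3. Moreover, even granting Step 2, Step 3 has its own gap: knowing that each interpretation's boundary set is a $\|\varphi\|$-syndetic union of arithmetic progressions compatible with the period of $u$ does not force two such sets to intersect, since two progressions with the same common difference inside a periodic word can occupy disjoint residue classes. Indeed, ``periodic word with syndetic, non-intersecting boundary sets'' is exactly how circularity \emph{fails} for unbounded letters (cf.\ the interpretations $(bc)^m$ and $(cb)^m$ of $(bc)^{2m}$ in \cref{ex:weak-needed}), so some ingredient specific to bounded letters must enter, and your proposal never isolates it.

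The missing ingredient is rigidity rather than periodicity. Since $\varphi$ is non-erasing, $(|\varphi^m(c)|)_m$ is non-decreasing and bounded for $c\in\A_0$, so there is an $n$ with $|\varphi^{m+1}(c)|=|\varphi^m(c)|$ for all $m\ge n$ and all $c\in\A_0$; consequently $\varphi$ acts letter-to-letter on every letter occurring in $\varphi^n(v)$ for $v$ over $\A_0$. Writing any interpretation of a long bounded factor $u$ as $x\,\varphi^n(v)\,y$ with $x,y$ contributing only a bounded amount near the ends (your Step 1 decomposition, pushed $n$ levels down through the sequence $(w_i)$), one finds that every interpretation of $u$ is letter-to-letter except within $\|\varphi^{n+1}\|$ of the ends, so \emph{every} interior position is a block boundary of \emph{every} interpretation and synchronization is immediate, with a constant of the order of $\|\varphi^{n+1}\|\cdot|w_0|$. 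Note that this mechanism, not periodicity, is also what makes the $b^nc^n$ example above synchronize. Your Step 1 is correct and points in the right direction, but Steps 2 and 3 should be replaced by this stabilization argument.
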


\begin{proof}
	The statement is trivial for non-pushy D0L systems, hence we consider a pushy one. Clearly, there exist an integer $n$ such that for all $c \in \A_0$ we have $|\varphi^m(c)| = |\varphi^{m+1}(c)|$ for every $m \geq n$.
	Let $u$ be a factor over bounded letters only of length at least $L = 3 \| \varphi^{n+1} \|\cdot |w_0|$ where $w_0$ is the axiom of the D0L system. This implies that $u$ appears in the sequence $E(G)= (w_i)_{i \geq 0}$ in $w_k$ for $k > n+1$.
	
	Let $(p,w,s)$ be an interpretation of $u$. Since $u$ is a factor of $w_k$ such that $k > n+1$ and $|w_k| > L$, there must be words $x, y \in \A^*$ and $v \in \A^+$ such that $w = x\varphi^n(v)y$ and 
	$$
		|\varphi(x)| - |p| < \|\varphi^{n+1}\| \quad  \text{and} \quad |\varphi(y)| - |s| < \|\varphi^{n+1}\|.
	$$
	As $\varphi^{n+1}(v)$ is a factor of $u$, it contains only bounded letters, and thus so does the word $v$. Moreover, by the definition of $n$, every letter $c$ occurring in $\varphi^n(v)$ satisfies $|\varphi^n(c)| = |\varphi^{n+1}(c)|$.
	
	It follows that any two interpretations $(p,w,s)$ and $(p',w',s')$ of the word $u$ are synchronized at position $k = \|\varphi^{n+1}\|$.
	
\end{proof}

\begin{proof}[Proof of \cref{thm:main-result}]

Consider a PD0L system $G = (\A,\varphi, w_0)$ with infinite language (the statement for D0L system with finite language is trivial). We define a partition of the alphabet $\A = \Sigma_m \cup \Sigma_{m-1} \cup \cdots \cup \Sigma_1 \cup  \Sigma_0$ as follows:
\begin{enumerate}[(i)]
	\item $\Sigma_0 = \A_0$ is the set of bounded letters,
	\item if $x$ and $y$ are from $\Sigma_i$, then the sequence $\left(\frac{|\varphi^n(x)|}{|\varphi^n(y)|}\right)_{n \geq 1}$ is $\Theta(1)$,
	\item for all $i = 1, \ldots, m$, if $x$ is an element of $\Sigma_i$ and $y$ of $\Sigma_{i-1}$, then $\displaystyle \lim_{n \to + \infty} \frac{|\varphi^n(x)|}{|\varphi^n(y)|} = + \infty$.
\end{enumerate}
This partition is well defined due to~\cite{SaSo} where it is proved that for any $a \in \A$ there are numbers $\alpha \in \N$ and $\beta \in \R_{\geq 1} \cup \{0\}$ such that $|\varphi^n(a)| = \Theta(n^\alpha \beta^n)$. Further we define for all $j = 0, 1, \ldots, m$ the sets
$$
	\A_j = \bigcup_{0 \leq i \leq j} \Sigma_{i}.
$$
Note that $\varphi(\A_j) \subset \A_j$ and $\varphi(\A_j) \cap \Sigma_j \neq  \emptyset$.

Lemma \ref{le:bounded_are_sync} implies that factors without synchronizing point over $\A_0$ are bounded in length.
Fix a positive integer $j$ and assume that there is a factor without synchronizing point of arbitrary length over $\A_j$.
Let $k$ be a positive integer. For any positive $\ell \in \N$ we can find words $u^{(k)}_\ell \in \A_j^*$ and $v^{(k)}_\ell \in \A_j^*$ and letters $a^{(k)}_\ell \in \A_j$ and $b^{(k)}_\ell \in \A_j$ such that 
\begin{enumerate}[(a)]
	\item \label{du-1} $|u^{(k)}_\ell| = k$,
	\item \label{du-2} $\varphi^\ell(v^{(k)}_\ell)$ is a factor of $\varphi^\ell(a^{(k)}_\ell u^{(k)}_\ell b^{(k)}_\ell)$ and $\varphi^\ell(u^{(k)}_\ell)$ is a factor of $\varphi^\ell(v^{(k)}_\ell)$,
	\item $\varphi^\ell(u^{(k)}_\ell)$ has two non-synchronized interpretations $$(\varepsilon, \varphi^{\ell-1}(u^{(k)}_\ell), \varepsilon) \text{ and  } (p^{(k)}_\ell,\varphi^{\ell-1}(v^{(k)}_\ell),s^{(k)}_\ell)$$ where $p^{(k)}_\ell\varphi^{\ell}(u^{(k)}_\ell)s^{(k)}_\ell = \varphi^{\ell}(v^{(k)}_\ell)$.
\end{enumerate}
Since the length of $u^{(k)}_\ell$ is fixed, there must be an infinite set $E_1^{(k)} \subset \N$ such that $u^{(k)}_i = u^{(k)}_j = u^{(k)}$, $a^{(k)}_i = a^{(k)}_j = a^{(k)}$ and $b^{(k)}_i = b^{(k)}_j = b^{(k)}$ for all $i,j$ from $E_1^{(k)}$.

If for each $k$ there are indices $\ell_1 > \ell_2$ in $E_1^{(k)}$ such that $v^{(k)}_{\ell_1} = v^{(k)}_{\ell_2} = v^{(k)}$ and if the number of letters from $\Sigma_j$ in $u^{(k)}$ tends to $+\infty$ as $k \to +\infty$, then $G$ is repetitive by \cref{lem:kurka} and the proof is finished.

Assume no such indices $\ell_1, \ell_2$ exist for some $k$, then $|v^{(k)}_\ell|$ must go to infinity as $\ell \to +\infty$. It follows from \eqref{du-1} and \eqref{du-2} that the number of letters from $\Sigma_j$ in words $v^{(k)}_\ell$ is bounded (or even zero) and so there must be $j' \in \{ 1,\ldots, j-1\}$ such that the number of letters from $\Sigma_{j'}$ in $v^{(k)}_\ell$ goes to infinity as $\ell \to +\infty$ and there is a factor  without a synchronizing point over $\A_{j'}$ of arbitrary length. Note that such $j'$ must exist since number of letters from $\Sigma_j$ is bounded and the factors of $v^{(k)}_\ell$ containing only letters from $\A_0$ are bounded in length.

If such indices $\ell_1, \ell_2$ exist for each $k$ but the number of letters from $\Sigma_j$ in $u^{(k)}$ is bounded as $k \to +\infty$, there must be again some $j' \in \{ 1,\ldots, j-1\}$ such that the number of letters from $\Sigma_{j'}$ in $u^{(k)}$ goes to infinity as $k \to +\infty$ and there is again a factor without a synchronizing point over $\A_{j'}$ of arbitrary length.

Overall, given the integer $j$, we either prove $G$ is repetitive by \cref{lem:kurka} or we find a positive integer $j'$ less than $j$ such that there is a factor without a synchronizing point  over $\A_{j'}$ of arbitrary length. In the latter case we repeat the construction for $j = j'$.

The only remaining case is when $j = 1$, i.e. we have a factor  without a synchronizing point  over $\A_1$ of arbitrary length. 
Even in this case we can repeat the construction above. However, the case when $\ell_1 > \ell_2$ such that $v^{(k)}_{\ell_1} = v^{(k)}_{\ell_1} = v^{(k)}$ do not exists for some $k$ is not possible.

Indeed, it cannot happen that $|v^{(k)}_\ell|$ goes to infinity as $\ell \to +\infty$: $v^{(k)}_\ell$ must consist of letters from $\A_1 = \Sigma_1 \cup \Sigma_0$. 
Since $u^{(k)}$ is over $\A_1$ as well (with at least one letter from $\Sigma_1$ for $k$ large enough), 
the number of letters from $\Sigma_1$ in $v^{(k)}$ cannot be unbounded (for $\ell \to +\infty$) by the definition of $\Sigma_1$. Clearly, again by \cref{le:bounded_are_sync}, the number of letters from $\A_0$ in $v^{(k)}_\ell$ is bounded as well (for $\ell \to +\infty$) and
so the  indices $\ell_1 > \ell_2$ must exist so that $v^{(k)}_{\ell_1} = v^{(k)}_{\ell_1} = v^{(k)}$ . 

Moreover, since factors without a synchronizing point over $\A_0$ are bounded in length, the number of letters from $\Sigma_1$ in $u^{(k)}$ goes to infinity as $k \to +\infty$.

This all implies that $G$ is repetitive by \cref{lem:kurka}.
	
\end{proof}

\section{Simple criterion for circularity}

\begin{definition}
We say that a D0L system $G$ is \textit{unboundedly repetitive} if there exists $w \in S(L(G))$ such that
$w^k \in S(L(G))$ for all $k$ and $w$ contains at least one unbounded letter.
\end{definition}

In \cite{EhRo78}, the authors introduced the notion of simplification to study properties of a D0L system.
Given an endomorphism $\varphi$ over $\A$, the endomorphism $\Psi$ over $\B$ is its \textit{simplification}
if $\# \B < \# \A$ and there exist morphisms $h: \A^* \to \B^*$ and $k: \B^* \to \A^*$ such that $\varphi = kh$ and $\Psi = hk$.
A corollary of the defect theorem (see \cite{KoOt00}) is that every non-injective morphism has a simplification which is injective, called an \textit{injective simplification}. Specially, injective $G$ is its own injective simplification.

The following claim follows from Proposition 4.3 in \cite{KoOt00} and Theorem 2 in \cite{KlSt13}.
\begin{proposition} \label{unboundedly-repetitive-characterization}
A D0L system $G$ is unboundedly repetitive if and only if 
for some its injective simplification $G' = (\B, \psi, w'_0)$  of $G$ there is a positive integer $\ell $ and $a \in \B$ such that
			$$
				(\psi^\ell)^\infty(a) = w^\omega \quad \text{ for some } w \in \B^+.
			$$
\end{proposition}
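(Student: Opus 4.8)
The plan is to prove the two implications separately, leaning on \Cref{thm:main-result} and the two cited results as black boxes.

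\textbf{Sketch of proof (plan).}
First I would set up the reduction to injective simplifications. By the corollary of the defect theorem recalled above, $G$ has an injective simplification $G' = (\B,\psi,w_0')$ with $\varphi = kh$, $\psi = hk$; note $h$ and $k$ need not be length-preserving but they are non-erasing in the relevant cases, and $h$ maps $L(G)$ into $L(G')$ while $k$ maps $L(G')$ into $S(L(G))$ (this is exactly the content of the morphic relationship between a D0L system and its simplification, as in \cite{EhRo78,KoOt00}). The first key point is that unbounded repetitivity is preserved in both directions under taking an injective simplification: this is precisely Proposition~4.3 in \cite{KoOt00}, which I would cite to say that $G$ is unboundedly repetitive if and only if $G'$ is unboundedly repetitive, with the unbounded letter in the repeated word of one system corresponding (via $h$ or $k$) to an unbounded letter in the repeated word of the other. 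So it suffices to characterize unbounded repetitivity for an \emph{injective} D0L system $G'$.

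For the injective case, I would invoke Theorem~2 in \cite{KlSt13}. That result (on the structure of repetitions in injective D0L systems) says, roughly, that if an injective D0L system is unboundedly repetitive then the repetition is "generated" by a periodic point of some power $\psi^\ell$ of the morphism: there is $a \in \B$ and $\ell \geq 1$ with $(\psi^\ell)^\infty(a)$ purely periodic, i.e.\ equal to $w^\omega$ for some $w \in \B^+$, and moreover $a$ is unbounded precisely when the period word $w$ (equivalently $\psi^\ell(a)$) involves an unbounded letter — and for an injective morphism a letter $a$ with $(\psi^\ell)^\infty(a)=w^\omega$ forces the word $w$ to contain an unbounded letter unless the whole orbit of $a$ is bounded, which is the degenerate finite-language situation excluded by the hypothesis that we have a genuine unbounded repetition. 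Conversely, if such $a$, $\ell$, $w$ exist, then all powers $w^k$ are factors of the infinite word $(\psi^\ell)^\infty(a) \in S(L(G'))$ (after one applies $\psi$ to push the periodic point into the language, $\psi^{\ell}(a)$ being a factor of $\psi^{2\ell}(a)$, etc.), and $w$ contains an unbounded letter, so $G'$ is unboundedly repetitive. Chaining these equivalences with the simplification reduction gives the statement.

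The main obstacle I expect is the bookkeeping around \emph{which} letter is unbounded and making sure "unbounded" is transported correctly across the morphisms $h,k$ of the simplification and across the passage between "$w^k$ a factor for all $k$" and "a purely periodic morphic fixed point". The morphisms $h$ and $k$ can merge or split letters, so one must check that a bounded period word cannot masquerade as an unbounded repetition and vice versa; this is where the precise statements of Proposition~4.3 of \cite{KoOt00} and Theorem~2 of \cite{KlSt13} do the real work, and I would quote them verbatim rather than re-derive them. A secondary, more routine point is handling the cases $\ell$ versus $1$ and the axiom: one shows that $(\psi^\ell)^\infty(a)=w^\omega$ for some $\ell$ is equivalent to the same for a single suitably chosen $\ell$ tied to the cycle length of $a$ under $\psi$ in the "rank graph", and that membership of the periodic point's factors in $S(L(G'))$ follows because $a$ (being a letter of the minimal alphabet) occurs in $L(G')$, hence so does every $\psi^n(a)$, hence so does every factor of $w^\omega$.
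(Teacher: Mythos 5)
Your proposal is correct and follows essentially the same route as the paper: the paper gives no written proof at all, merely asserting that the proposition ``follows from Proposition 4.3 in \cite{KoOt00} and Theorem 2 in \cite{KlSt13}'', and these are exactly the two black boxes you invoke (transfer of unbounded repetitivity to the injective simplification, then the periodic-point characterization for injective systems). Your sketch in fact supplies more of the connecting tissue than the paper does.
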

In fact, if the condition in the previous claim is satisfied for some injective simplification, then it is satisfied for all injective simplifications.

Using this proposition and Theorem 1 of \cite{KlSt13} we deduce the following theorem.
\begin{theorem} \label{repetitive-pushy-or-unbounded}
	Let $G$ be a repetitive D0L system, then one of the following is true:
	\begin{enumerate}[(i)]
		\item $G$ is pushy,
		\item $G$ is unboundedly repetitive.
	\end{enumerate}
\end{theorem}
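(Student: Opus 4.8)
The plan is to read off the dichotomy directly from the strong-repetitiveness property recorded in the preliminaries, invoking Proposition~\ref{unboundedly-repetitive-characterization} only to recast case (ii) in the form used later. Since $G$ is repetitive, by~\cite{EhRo83} it is strongly repetitive, so there is a non-empty word $r$ with $r^k \in S(L(G))$ for every $k \in \N$. I would fix such an $r$ and split according to whether it contains an unbounded letter.

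If $r$ contains an unbounded letter, then $r \in S(L(G))$, $r^k \in S(L(G))$ for all $k$, and $r$ has an unbounded letter, so $G$ is unboundedly repetitive by definition — case (ii). Otherwise $r \in \A_0^+$, and then $r, r^2, r^3, \dots$ are pairwise distinct factors of $S(L(G))$ over bounded letters only, so $S(L(G))$ contains infinitely many factors over $\A_0$ and $G$ is pushy — case (i). Since a witness $r$ always exists, one of the two cases holds, which proves the theorem.

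What the cited results contribute, and where one might prefer a slightly different route, is the passage through an injective simplification: Theorem~1 of~\cite{KlSt13} describes the repetitions of a repetitive injective D0L system, while Proposition~\ref{unboundedly-repetitive-characterization} transports unbounded repetitiveness between $G$ and any injective simplification $G' = (\B,\psi,w'_0)$, turning case (ii) into the effective statement $(\psi^\ell)^\infty(a) = w^\omega$ that the circularity test following this theorem relies on. An alternative proof would therefore pass to $G'$, apply the structural result there, and carry the conclusion back, also using that simplifications preserve pushiness. I expect no conceptual obstacle here, only bookkeeping: one must check that the letter extracted from the strong-repetitiveness witness genuinely has an infinite $\varphi$-orbit, and — for the reduction route — that simplification neither creates nor destroys an unbounded repetition, both of which follow from the defect-theorem properties of simplifications quoted just before Proposition~\ref{unboundedly-repetitive-characterization}. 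No lengthy computation is involved.
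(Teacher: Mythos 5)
Your proof is correct, and it is genuinely more elementary than what the paper does. The paper gives no argument at all for this theorem: it simply states that the dichotomy is ``deduced'' from Proposition~\ref{unboundedly-repetitive-characterization} (itself resting on Proposition 4.3 of \cite{KoOt00} and Theorem 2 of \cite{KlSt13}) together with Theorem 1 of \cite{KlSt13}, i.e.\ it routes everything through the structure theory of injective simplifications. You instead use only the fact, recorded in the preliminaries, that a repetitive D0L system is strongly repetitive \cite{EhRo83}: fix a single witness $r$ with $r^k \in S(L(G))$ for all $k$; if $r$ contains an unbounded letter, $G$ is unboundedly repetitive by definition, and if $r \in \A_0^+$, the pairwise distinct powers $r, r^2, r^3, \dots$ are infinitely many factors over $\A_0$, so $G$ is pushy. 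This case split is exhaustive by definition, so there is nothing further to check --- in particular the ``bookkeeping'' you worry about in your last paragraph (verifying the letter has infinite $\varphi$-orbit, transporting repetitions through a simplification) is not needed for your first argument, only for the alternative reduction route you sketch. The trade-off: your proof is self-contained modulo the (nontrivial, but explicitly quoted) strong-repetitiveness upgrade of \cite{EhRo83}, whereas the paper's citation chain has the side benefit of landing directly on the effective form $(\psi^\ell)^\infty(a) = w^\omega$ in an injective simplification, which is what the circularity algorithm actually tests; but that effective form is the content of Proposition~\ref{unboundedly-repetitive-characterization}, not of this theorem, so your argument fully suffices for the statement as given.
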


In the previous section we proved that any PD0L system that is not weakly circular is repetitive. 
The next theorem gives a characterization of injective circular D0L systems.

\begin{theorem} \label{thm:circ-iff-ur}
	An injective D0L system $G = (\A, \varphi, w)$ is not circular if and only if it is unboundedly repetitive.
\end{theorem}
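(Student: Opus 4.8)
The plan is to establish the two implications separately, after two quick reductions. An injective D0L system is non-erasing: if $\varphi(c)=\varepsilon$ for a letter $c$, then $c,\varepsilon\in S(L(G))$ (the alphabet is minimal, so $c$ is a factor) and $\varphi(c)=\varphi(\varepsilon)$, contradicting injectivity; hence $G$ is a PD0L system and all the notions above apply. Moreover, as noted after the definition of weak circularity, for injective systems circularity and weak circularity coincide, so in the first implication we may work with weak circularity.

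($\Rightarrow$) Assume $G$ is injective and not circular, hence not weakly circular. By \cref{thm:main-result}, $G$ is repetitive, and by \cref{repetitive-pushy-or-unbounded} it is pushy or unboundedly repetitive; I would show it must be unboundedly repetitive. In the proof of \cref{thm:main-result}, \cref{lem:kurka} is always applied to words $u$ containing an unbounded letter, and in the proof of \cref{lem:kurka} the resulting period $z$ satisfies that $\varphi^m(u)$ — which then also contains an unbounded letter — is a prefix of $z^\ell$; hence $z$ itself contains an unbounded letter. Together with \cref{le:bounded_are_sync} (which forces the arbitrarily long factors without a synchronizing point to involve unbounded letters, so that the above argument does apply), this yields, for every $k$, a word $z_k$ containing an unbounded letter with $z_k^k\in S(L(G))$. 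If the lengths $|z_k|$ stay bounded, a pigeonhole argument produces a single $z$ with an unbounded letter and $z^k\in S(L(G))$ for arbitrarily large $k$, hence for all $k$ (being a factor is downward closed in the exponent), so $G$ is unboundedly repetitive.

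($\Leftarrow$) Assume $G$ is unboundedly repetitive; I want to show it is not weakly circular, which gives not circular since circularity implies weak circularity. By \cref{unboundedly-repetitive-characterization}, since the injective $G$ is its own injective simplification, there are $\ell\geq 1$ and $a\in\A$ with $(\varphi^\ell)^\infty(a)=z^\omega$ for some $z\in\A^+$; as $a$ is unbounded and occurs in $z$, the word $z$ contains an unbounded letter, and after replacing $z$ by its primitive root we have $\varphi^\ell(z)=z^q$ with $q\geq 2$. All prefixes of $z^\omega$ lie in $S(L(G))$. The goal is to show that for arbitrarily large $K$ the factor $z^K$ admits two interpretations that are not synchronized at any position, so it has no synchronizing point. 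Here $z^K$ occurs in $z^{qK'}=\varphi^\ell(z^{K'})$ at every position that is a multiple of $|z|$; the block structure of $z^{qK'}$, read as $\varphi$ of $\varphi^{\ell-1}(z^{K'})$, has super-period $|\varphi^\ell(z)|=q|z|$, so the occurrences at positions $0,|z|,\dots,(q-1)|z|$ realize $q$ distinct phases. Because $z$ contains an unbounded letter, for large $K'$ some $\varphi$-block is much longer than $|z|$; one then checks that the block boundaries inside two such occurrences are shifted relative to one another in a way that prevents a common clean cut, so no position is a synchronizing point of $z^K$.

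In both directions the main obstacle is passing from ``many local repetitions/misalignments'' to a single global object. In ($\Rightarrow$) this is the case $|z_k|\to\infty$: one must still extract a single infinitely repeated word containing an unbounded letter, which I expect needs a closer look at the rank partition $\A=\Sigma_m\cup\cdots\cup\Sigma_0$ used in the proof of \cref{thm:main-result}, or an argument that in the pushy case the periods produced by \cref{lem:kurka} stay bounded. In ($\Leftarrow$) it is making the phase argument robust to the non-uniformity of $\varphi$: short images of bounded letters could a priori let all the periodic phases share one clean cut, and excluding exactly those degenerate configurations is where injectivity has to enter.
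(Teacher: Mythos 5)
Your skeleton is the paper's (reduce to a PD0L system, apply \cref{thm:main-result} and \cref{repetitive-pushy-or-unbounded} for $(\Rightarrow)$, and \cref{unboundedly-repetitive-characterization} for $(\Leftarrow)$), but both directions stop exactly where you flag them, and in each case the paper closes the gap with an idea your proposal lacks. For $(\Rightarrow)$, you try to extract from the periods $z_k$ a single infinitely repeated word containing an unbounded letter, and you get stuck when $|z_k|\to\infty$. The paper never performs this extraction: it argues by contradiction. If $G$ were pushy but \emph{not} unboundedly repetitive, there would be a uniform bound $N$ such that every repetition $u^\ell$ with $\ell>N$ has $u\in\A_0^+$. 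The repetitions $z_k^{\lfloor\epsilon(k)\rfloor}$ produced in the proof of \cref{thm:main-result} have exponents tending to infinity, yet their periods cannot lie in $\A_0^+$: a long power of a word over bounded letters has a synchronizing point by \cref{le:bounded_are_sync}, and a synchronizing point of a factor induces one of the ambient word $\varphi^m(u)$, which by construction has none. This contradicts the bound $N$ directly, so no single infinitely repeated word ever needs to be produced. The missing ingredient is thus the uniform exponent bound that ``not unboundedly repetitive'' supplies (via the structure of repetitions from the cited results), not a compactness argument on the family $(z_k)$.

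For $(\Leftarrow)$, your phase argument ends with ``one then checks,'' and you correctly sense that degenerate alignments are the obstruction. The paper sidesteps the whole analysis by invoking the refinement from the cited work that the period $w$ in $(\varphi^\ell)^\infty(a)=w^\omega$ can be chosen to contain the letter $a$ exactly once, at its beginning. Then $\varphi^\ell(w)=w^k$ with $k>1$, and injectivity forbids $\varphi^\ell(t)=w$ for any nonempty factor $t$ of $w^\omega$ (otherwise $\varphi^\ell(t^k)=\varphi^\ell(w)$ forces $t^k=w$, impossible because $w$ contains $a$ exactly once while $k>1$). Hence the two explicit interpretations $(\varepsilon,w^n,\varepsilon)$ and $(w,w^{n+1},w^{k-1})$ of $w^{nk}$ are synchronized at no position, since any synchronization would produce exactly such a $t$. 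Without this normalization of $w$, the ``degenerate configurations'' you mention genuinely can occur, and the argument as you describe it does not close.
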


\begin{proof}
$(\Rightarrow)$:
As an injective morphism is non-erasing, \Cref{thm:main-result} implies that $G$ is repetitive.
Thus, by \Cref{repetitive-pushy-or-unbounded}, $G$ is pushy or unboundedly repetitive.
Suppose it is pushy and not unboundedly repetitive.
Therefore, there exist an integer $N$ such that all repetitions $u^\ell$ where $\ell > N$ and $u \in S(L(G))$ are over bounded letters only, i.e., $u \in \A_0^+$.
From the proof of \Cref{thm:main-result} one can see that long enough non-synchronized factors contain longer and longer repetitions but these repetitions cannot be over bounded letters due to \Cref{le:bounded_are_sync} -- a contradiction.
	
$(\Leftarrow)$:
\Cref{unboundedly-repetitive-characterization} implies that there is a positive integer $\ell $ and a letter $a$ such that
$(\varphi^\ell)^\infty(a) = w^\omega$ for some $w \in \A^+$.
In \cite{KlSt13} it is proved that the word $w$ can be taken so that it contains the letter $a$ only once at its beginning.
It follows that $\varphi^\ell(w) = w^k$ for some $k > 1$.
Since $\varphi$ is injective, we must have $\varphi(p) \neq w$ for all prefixes $p$ of $w$.
This implies that for all $n \in \N$ the word $w^{nk}$ has two non-synchronized interpretations $(\varepsilon, w^n, \varepsilon)$ and $(w,w^{n+1},w^{k-1})$.
\end{proof}

\begin{remark}
	In the previous theorem, we cannot omit assumption of injectiveness and replace circularity with weak circularity: consider again the D0L system $G_1$ from \cref{ex:weak-needed}. The conditions of \Cref{unboundedly-repetitive-characterization} is satisfied for $\ell = 1$ and the letter $b$ with $w = bc$ but still the corresponding D0L system is weakly circular.
\end{remark}

Since the existence of $\ell$ and $a$ satisfying conditions of \Cref{unboundedly-repetitive-characterization} can be tested by a simple and fast algorithm described in~\cite{La91}, we have a simple algorithm deciding circularity.

As a corollary of \Cref{thm:circ-iff-ur}, we retrieve the following result of \cite{Mo96}.
A morphism $\varphi: \A^* \to \A^*$ is \textit{primitive} if there exists an integer $k$ such that for all letters $a,b \in \A$, the letter $b$ appears in $\varphi^k(a)$.
An infinite word $\uu$ is a \textit{periodic point} of a morphism $\varphi$ if there exists an integer $k$ such that $\varphi^k(\uu) = \uu$.

\begin{corollary}[\cite{Mo96}]
	If $\uu$ is an aperiodic fixed point of a primitive morphism injective on $S(L(G))$, then it is circular.
\end{corollary}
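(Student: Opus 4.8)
The plan is to deduce the corollary from \Cref{thm:circ-iff-ur} by showing that the D0L system canonically attached to $\uu$ cannot be unboundedly repetitive. First I would fix the setup: let $a$ be the first letter of $\uu$, so that $\uu = \lim_{n\to\infty}\varphi^n(a)$ and $G=(\A,\varphi,a)$ is a D0L system whose factor set $S(L(G))$ is exactly the set of factors of $\uu$. Since $\varphi$ is primitive it is non-erasing, and $\varphi$ is injective on $S(L(G))$ by hypothesis, so $G$ is an injective D0L system and \Cref{thm:circ-iff-ur} applies. Hence it suffices to prove that $G$ is \emph{not} unboundedly repetitive.

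Next I would argue by contradiction. Assume $G$ is unboundedly repetitive. Because an injective system is its own injective simplification, \Cref{unboundedly-repetitive-characterization} yields an integer $\ell\ge 1$ and a letter $b\in\A$ such that $(\varphi^\ell)^\infty(b)=v^\omega$ for some $v\in\A^+$; set $z=v^\omega$, a periodic infinite word that is a fixed point of $\varphi^\ell$, and note that $\varphi^\ell$ is again primitive.

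The crux is then to show that every factor of $\uu$ is a factor of $z$, which is impossible since $z$ is periodic while $\uu$ is aperiodic. This is the familiar fact that a primitive substitution has a single minimal subshift; concretely I would proceed as follows. By primitivity of $\varphi^\ell$ the letter $a$ occurs in $z$, and since $z=\varphi^{\ell t}(z)$ for all $t\ge 0$, the word $\varphi^{\ell t}(a)$ occurs in $z$ for every $t$. Given a factor $w$ of $\uu$, choose $N$ with $w$ a factor of $\varphi^N(a)$ and $t$ with $\ell t\ge N$; as $\varphi(a)$ begins with $a$, the word $\varphi^N(a)$ is a prefix of $\varphi^{\ell t}(a)$, so $w$ is a factor of $\varphi^{\ell t}(a)$ and hence of $z$. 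Therefore $\uu$ has at most $|v|$ factors of each length, so by the Morse--Hedlund theorem $\uu$ is eventually periodic --- a contradiction. I expect this factor-set comparison, together with the bookkeeping around ``$\uu$ starts with $a$'' and one-sided versus periodic words, to be the only genuine obstacle; the remaining steps are direct invocations of \Cref{thm:circ-iff-ur} and \Cref{unboundedly-repetitive-characterization}. Once $G$ is shown not to be unboundedly repetitive, \Cref{thm:circ-iff-ur} gives that $G$, and thus $\uu$, is circular.
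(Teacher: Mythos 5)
Your proof is correct and follows essentially the same route as the paper: reduce via \Cref{thm:circ-iff-ur} and \Cref{unboundedly-repetitive-characterization} to showing that a periodic periodic-point would force $\uu$ to be eventually periodic. The only difference is that you spell out (via primitivity and Morse--Hedlund) the standard fact that the paper invokes in one line, namely that all periodic points of a primitive morphism share the same language as $\uu$.
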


\begin{proof}
Any periodic point of a primitive morphism has the same language as $\uu$.
Therefore, every periodic point is aperiodic and so the condition of \Cref{unboundedly-repetitive-characterization} cannot be satisfied.
\Cref{thm:circ-iff-ur} yields the result.
\end{proof}

\section*{Acknowledgements}

This work was supported by the Czech Science Foundation grant GA\v CR 13-35273P.

\bibliographystyle{alpha}
\bibliography{biblio}

\end{document}